\newtheorem{theorem}{Theorem}
\newtheorem{proposition}{Proposition}
\newtheorem {lemma}{Lemma}
\theoremstyle{definition}
\newtheorem{example}{Example}
\DeclareMathOperator\sgn{sgn}
\begin{document}

\baselineskip=17pt

\title[]{A characterization of the algebraic degree\\ in semidefinite programming}

\author{Dang Tuan Hiep}
\address{Faculty of Mathematics and Computer Science, Da Lat University, Lam Dong, Vietnam}
\email{hiepdt@dlu.edu.vn}

\author{Nguyen Thi Ngoc Giao}
\address{Faculty of Advanced Science and Technology, University of Science and Technology - The University of Da Nang, 54 Nguyen Luong Bang, Da Nang, Vietnam}
\email{ngocgiao185@gmail.com}

\author{Nguyen Thi Mai Van}
\address{Faculty of Mathematics and Statistics, Quy Nhon University, Binh Dinh, Vietnam}
\email{nguyenthimaivan@qnu.edu.vn}

\begin{abstract}
In this article, we show that the algebraic degree in semidefinite programming can be expressed in terms of the coefficient of a certain monomial in a doubly symmetric polynomial. This characterization of the algebraic degree allows us to use the theory of symmetric polynomials to obtain many interesting results of Nie, Ranestad and Sturmfels in a simpler way.
\end{abstract}

\subjclass[2010]{14N15, 14F43, 68W30, 90C22}

\keywords{Schubert calculus, algebraic degree, semidefinite programming.}

\date{\today}

\maketitle
\tableofcontents
\section{Introduction}

Consider the semidefinite programming (SDP) problem in the form 
\begin{equation}\label{SDP}
  \text{maximize trace}(B \cdot Y) \text{ subject to } Y \in \mathcal U \text{ and } Y \succeq 0,
\end{equation}
where $B$ is a real symmetric $n\times n$-matrix, $\mathcal U$ is a $m$-dimensional affine subspace in the $\binom{n+1}{2}$-dimensional space of real $n \times n$-symmetric matrices, and $Y \succeq 0$ means that $Y$ is positive semidefinite. We know that the coordinates of the optimal solution are the roots of some univariate polynomials. If the data are generic, then the degree of these polynomials depends only on the rank $r$ of the optimal solution. This is what we call the {\it algebraic degree} $\delta(m,n,r)$ in the semidefinite programming \eqref{SDP}. Note that the algebraic degree $\delta(m,n,r)$ is well-defined only if the triple $(m,n,r)$ satisfies {\it Pataki's inequalities} (see \cite[Proposition 5]{NRS}), that is
\begin{equation}\label{pataki}
  \binom{n-r+1}{2} \leq m \leq \binom{n+1}{2} - \binom{r+1}{2}.
\end{equation}

In \cite{NRS}, Nie, Ranestad and Sturmfels first introduced and showed that the algebraic degree $\delta(m,n,r)$ in semidefinite programming coincides with a degree of a dual variety (see \cite[Theorem 13]{NRS}), by using methods from complex algebraic geometry. In particular, one of their main results was a collection of explicit formulas of the algebraic degree $\delta(m,n,r)$ for various special values of $m, n, r$ (see \cite[Theorem 11]{NRS}), which was obtained via computing the Euler numbers of smooth varieties, the degrees of determinantal varieties, and so on. Moreover, they also gave a general formula of the algebraic degree $\delta(m,n,r)$ which was conjectured to hold arbitrary values of $m,n$ and $r$ (see \cite[Theorem 19, Conjecture 21]{NRS}). After that, Graf von Bothmer and Ranestad showed that the algebraic degree $\delta(m,n,r)$ can be computed as an intersection number of Segre classes of the second symmetric powers of universal bundles on the Grassmannian $G(r,n)$ (see \cite[Proposition 4.1]{BR2009}). Furthermore, by using desingularization of the conormal variety of the variety of symmetric matrices of rank less than $r$, they gave a general formula for $\delta(m,n,r)$ in terms of a function on subsequences of $\{1,2,\ldots,n\}$ (see \cite[Theorem 1.1]{BR2009}). The first author drived another formula using minors and hence also readily computable, following a totally different technique from equivariant cohomology (see \cite[Theorem 1]{H1}). Recently, the study of the algebraic degree and related problems have received more and more attention. For instance, Manivel et. al. confirmed \cite[Conjecture 21]{NRS}, providing another formula for $\delta(m, n, r)$ and another proof of the polynomiality theorem (see \cite{MLMSV} and references therein).

In this paper, we provide a combinatorial approach for calculating the algebraic degrees using doubly symmetric polynomials. More concretely, we show that the algebraic degree $\delta(m,n,r)$ can be expressed in terms of the coefficient of a certain monomial in a doubly symmetric polynomial (see Theorem \ref{main}). By this characterization, the duality relation of the algebraic degree $\delta(m,n,r)$ can be easily obtained (see Proposition \ref{duality_relation}). As an application, we apply this characterization to reprove many interesting results of Nie, Ranested and Sturmfels (see Propositions \ref{n-1}, \ref{3_n-2} and \ref{4_n-2}) via some facts about symmetric polynomials (see Propositions \ref{key_lm} and \ref{key_lm2}). On the other hands, our main result, Theorem \ref{main}, can be viewed as a corollary of the first author’s recent works \cite[Theorem 1]{H1} and \cite[Theorem 1.2]{H2}. As a short digression, we also provide a self-contained proof of \cite[Theorem 1.2]{H2} (see Proposition 1), which is inspired by a result due to Don Zagier in the appendix of \cite[Proposition A.1]{GMZ}.

From now on, for any tuple $(m,n,r)$ satisfying Pataki's inequalities \eqref{pataki}, we will use the following notations
\begin{equation}\label{kl}
\mathscr k = m - \binom{n-r+1}{2} \text{ and } \mathscr l = \binom{n+1}{2} - \binom{r+1}{2} - m.
\end{equation}
Note that $\mathscr k + \mathscr l = r(n-r)$. Throughout, we work on $\mathbb Q[x_1,\ldots,x_r,y_1,\ldots,y_{n-r}]$ the polynomial ring over $\mathbb Q$ in $n$ variables $x_1,\ldots,x_r,y_1,\ldots,y_{n-r}$, unless we explicitly indicate otherwise. Let us consider the following sets
\begin{equation}\label{sum_variables}
\mathcal{X} := \left\{ x_i+x_j \mid 1 \leq i \leq j \leq r \right\} \text{ and } \mathcal{Y} := \left\{ y_i+y_j \mid 1 \leq i \leq j \leq n-r \right\}.
\end{equation}
Clearly, the cardinality of $\mathcal{X}$ and $\mathcal{Y}$ are respectively $\sharp \mathcal{X} = \binom{r+1}{2}$ and $\sharp \mathcal{Y} = \binom{n-r+1}{2}$. 

\begin{theorem}\label{main}
The algebraic degree
$$\delta(m,n,r) = (-1)^{\mathscr k}\frac{c(m,n,r)}{r!(n-r)!},$$
where $c(m,n,r)$ is the coefficient of the monomial $x_1^{n-1}\cdots x_r^{n-1}y_1^{n-1}\cdots y_{n-r}^{n-1}$ in the polynomial 
\[h_{\mathscr l}(\mathcal{X})h_{\mathscr k}(\mathcal{Y}) \prod_{j \neq i}(x_i-x_j)\prod_{j \neq i}(y_i-y_j)\prod_{i=1}^{n-r}\prod_{j=1}^r(y_i - x_j),\]
where $h_{\mathscr l}(\mathcal{X})$ (resp. $h_{\mathscr k}(\mathcal{Y})$) is the complete homogeneous symmetric polynomial of degree $\mathscr l$ (resp. $\mathscr k$), in $\binom{r+1}{2}$ (resp. $\binom{n-r+1}{2}$) variables which are the elements of the sets $\mathcal{X}$ (resp. $\mathcal{Y}$). 
\end{theorem}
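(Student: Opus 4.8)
The plan is to deduce Theorem~\ref{main} by feeding a Chern-class description of $\delta(m,n,r)$ into a coefficient-extraction formula for integrals over the Grassmannian $G(r,n)$; the two ingredients are precisely \cite[Theorem 1]{H1} (equivalently \cite[Proposition 4.1]{BR2009}) and \cite[Theorem 1.2]{H2}.

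First I would recall the geometric meaning of the algebraic degree. Writing $S$ and $Q$ for the tautological sub- and quotient bundles on $G(r,n)$, of ranks $r$ and $n-r$, the work of Graf von Bothmer and Ranestad realizes $\delta(m,n,r)$ as an intersection number built from the Segre classes of the second symmetric powers $\mathrm{Sym}^2 S$ and $\mathrm{Sym}^2 Q$. The key observation is purely about Chern roots: if $x_1,\dots,x_r$ are the Chern roots of $S$ and $y_1,\dots,y_{n-r}$ those of $Q$, then the Chern roots of $\mathrm{Sym}^2 S$ are exactly the elements of $\mathcal{X}$ and those of $\mathrm{Sym}^2 Q$ are exactly the elements of $\mathcal{Y}$. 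Since the total Segre class is inverse to the total Chern class, the degree-$d$ Segre class of a bundle equals $(-1)^d$ times the complete homogeneous symmetric polynomial $h_d$ of its Chern roots. Hence, after recording the dualization conventions built into the geometric formula (these distinguish $S$ from $Q$, as SDP duality demands), the integrand becomes, up to an explicit sign, the product $h_{\mathscr l}(\mathcal{X})\,h_{\mathscr k}(\mathcal{Y})$; concretely I expect the identity $\delta(m,n,r)=(-1)^{\mathscr l}\int_{G(r,n)} h_{\mathscr l}(\mathcal{X})\,h_{\mathscr k}(\mathcal{Y})$, which one can already confirm on $G(1,n)=\mathbb{P}^{n-1}$.

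Next I would convert this Grassmannian integral into the asserted coefficient, which is the role of \cite[Theorem 1.2]{H2} (reproved self-contained, in the spirit of Zagier's \cite[Proposition A.1]{GMZ}, as Proposition~1). The mechanism is Atiyah--Bott localization for the standard torus action: the integral becomes a sum over the $r$-element subsets $I\subseteq\{1,\dots,n\}$, where the roots $x$ specialize to $\{t_i:i\in I\}$, the roots $y$ to $\{t_j:j\notin I\}$, and the denominator is the restriction of the Euler class of the tangent bundle $T_{G(r,n)}=S^{\vee}\otimes Q$, namely $\prod_{i\in I,\,j\notin I}(t_j-t_i)$. This tangent Euler class is globally the factor $\prod_{i=1}^{n-r}\prod_{j=1}^{r}(y_i-x_j)$ appearing in the statement. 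Zagier's identity then repackages the whole localization sum as a single coefficient: one multiplies the integrand by this Euler class and by the two factors $\prod_{j\neq i}(x_i-x_j)$ and $\prod_{j\neq i}(y_i-y_j)$ that undo the symmetrization, reads off the coefficient of $x_1^{n-1}\cdots x_r^{n-1}y_1^{n-1}\cdots y_{n-r}^{n-1}$, and divides by $r!\,(n-r)!$ to account for the unordered Chern roots. A grading check supports the shape of the monomial: the three product factors have total degree $r(r-1)+(n-r)(n-r-1)+r(n-r)$, which together with $\mathscr k+\mathscr l=r(n-r)$ sums to $n(n-1)$, exactly the degree of the target monomial, so the coefficient is well defined.

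Combining the two steps gives $\delta(m,n,r)=(-1)^{\mathscr l}\cdot(-1)^{r(n-r)}\,c(m,n,r)/\bigl(r!\,(n-r)!\bigr)$, and since $\mathscr k+\mathscr l=r(n-r)$ the two signs collapse to $(-1)^{\mathscr l+r(n-r)}=(-1)^{\mathscr k}$, which is exactly the claim. I expect the main obstacle to be the sign bookkeeping rather than the structure: one must track the Segre-to-$h$ sign $(-1)^d$, the dualization hidden in the geometric formula (responsible for the factor $(-1)^{\mathscr l}$), and the orientation sign in Zagier's residue identity (responsible for $(-1)^{r(n-r)}$), and verify that they fuse into the single factor $(-1)^{\mathscr k}$. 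I would pin these signs down once and for all on the smallest cases $G(1,n)=\mathbb{P}^{n-1}$ and $G(2,3)$, using $\int_{\mathbb{P}^{n-1}}x_1^{n-1}=(-1)^{n-1}$ and evaluating the coefficient extraction against the square of the Vandermonde; everything else reduces to the routine verification that the two cited theorems combine as stated.
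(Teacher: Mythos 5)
Your proposal is correct and follows essentially the same route as the paper: it combines Graf von Bothmer--Ranestad's intersection formula with torus localization on $G(r,n)$ (which is precisely the content of \cite[Theorem 1]{H1}) and then applies the Zagier-style coefficient-extraction identity \cite[Theorem 1.2]{H2} (the paper's Proposition \ref{doublysympo}, proved via Lemma \ref{Lem2}), with the identical sign collapse $(-1)^{\mathscr l}(-1)^{r(n-r)}=(-1)^{\mathscr k}$. The only cosmetic difference is that you phrase the integrand directly in terms of Chern roots, whereas the paper quotes the determinantal form of \cite[Theorem 1]{H1} and converts it to complete homogeneous symmetric polynomials via the Jacobi--Trudi identity.
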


The rest of the paper is organized as follows: In Section 2, we recall an identity involving doubly symmetric polynomials and prove the main result. In Section 3, we present interesting results involving symmetric polynomials. In the final section, we use our characterization to reprove some known results of Nie-Ranestad-Sturmfels.

\section{Proof of the main result}

In this section, we present an identity involving doubly symmetric polynomials which appearing in \cite[Theorem 1.2]{H2} without proof. In which, the author claims that the proof of the identity is very similar to that of \cite[Theorem 1.1]{H2}, so it was omitted. We here present the proof using an effective argument which is inspired by a result due to Don Zagier in the appendix of \cite[Proposition A.1]{GMZ} (see also \cite[Lemma 2]{HT}). 

\begin{lemma}\label{Lem2}
Let $Q_1(x), \ldots, Q_n(x) \in \mathbb Q[x]$ be monic polynomials of degree $d_1+1, \ldots, d_n+1$ with distinct roots and 
$F(x_1,\ldots,x_r,y_1,\ldots,y_{n-r}) \in \mathbb Q[x_1,\ldots,x_r,y_1,\ldots,y_{n-r}]$ a polynomial in $n$ variables of degree not greater than $d_1+\ldots+d_n$. Then, the expression
\begin{equation}\label{eq1}
    \sum_{Q_1(\alpha_1)=\cdots=Q_n(\alpha_n)=0}\frac{F(\alpha_1, \ldots,\alpha_n)}{Q'_1(\alpha_1)\cdots Q'_n(\alpha_n)}
\end{equation}
is independent of all the $Q_i$ and is equal to the coefficient of the monomial $x_1^{d_1}\cdots x_r^{d_r}y_1^{d_{r+1}}\cdots y_{n-r}^{d_n}$ in $F(x_1, \ldots,x_r,y_1,\ldots,y_{n-r})$.
\end{lemma}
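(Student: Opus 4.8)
The plan is to reduce the statement to the single-variable case by exploiting the linearity of the expression \eqref{eq1} in $F$ together with the fact that the summation ranges over a Cartesian product of root sets. Write $t_1, \ldots, t_n$ for the variables $x_1, \ldots, x_r, y_1, \ldots, y_{n-r}$ and $d_1, \ldots, d_n$ for the corresponding target exponents, so that the value to be matched is the coefficient of $t_1^{d_1} \cdots t_n^{d_n}$ in $F$. Since both \eqref{eq1} and this target coefficient are linear in $F$, it suffices to verify the identity when $F$ is a single monomial $t_1^{e_1} \cdots t_n^{e_n}$ subject to $e_1 + \cdots + e_n \leq d_1 + \cdots + d_n$.

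For such a monomial the sum \eqref{eq1} factors completely over the $n$ variables,
\[
\sum_{Q_1(\alpha_1)=\cdots=Q_n(\alpha_n)=0}\frac{\alpha_1^{e_1}\cdots\alpha_n^{e_n}}{Q'_1(\alpha_1)\cdots Q'_n(\alpha_n)} = \prod_{k=1}^{n}\left(\sum_{Q_k(\alpha)=0}\frac{\alpha^{e_k}}{Q'_k(\alpha)}\right),
\]
so everything rests on a single-variable fact: for a monic polynomial $Q$ of degree $d+1$ with distinct roots and any exponent $e \leq d$, the sum $\sum_{Q(\alpha)=0}\alpha^e/Q'(\alpha)$ equals $1$ if $e=d$ and $0$ if $e<d$. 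I would establish this either by Lagrange interpolation — the sum is the leading coefficient of the degree-$\leq d$ interpolant of $t^e$ through the $d+1$ roots, which is $t^e$ itself — or, following the residue argument of Zagier that the authors cite, by writing $\sum_{\alpha}\alpha^e/Q'(\alpha)=\sum_{\alpha}\operatorname{Res}_{t=\alpha}\,t^e/Q(t)$ and evaluating minus the residue at infinity, which reads off the coefficient of $t^d$ in $t^e$.

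It remains to match the factorized product against the target coefficient, and this degree bookkeeping is where the hypothesis $\deg F \leq d_1+\cdots+d_n$ does the essential work. If $(e_1,\ldots,e_n)=(d_1,\ldots,d_n)$, every factor equals $1$ and the product is $1$, matching the coefficient of $t_1^{d_1}\cdots t_n^{d_n}$ in $F$. If $(e_k)\neq(d_k)$, I claim the product vanishes, again in agreement with the target coefficient. When all $e_k \leq d_k$, some index $j$ satisfies $e_j<d_j$, and its factor is $0$ by the single-variable fact. The only remaining possibility is that some $e_k$ exceeds $d_k$; but then, since $\sum e_k \leq \sum d_k$, a pigeonhole argument forces some other index $j$ with $e_j<d_j$, and the single-variable fact applies to that factor and yields $0$. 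In every case the product equals the coefficient of $t_1^{d_1}\cdots t_n^{d_n}$ in the monomial $F$; summing over monomials recovers the general case, and since the resulting value depends only on $F$ and the exponents $d_k$, the independence of the $Q_i$ is automatic. The one point requiring care is exactly this last dichotomy: the single-variable fact is available only for exponents not exceeding $d_k$, so the argument must route every vanishing through an index where the exponent is strictly below its target, and it is precisely the total-degree bound that guarantees such an index exists.
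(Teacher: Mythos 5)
Your proof is correct and follows essentially the same route as the paper's: reduce to monomials by linearity, factor the sum over the Cartesian product of root sets, and evaluate each single-variable factor via Lagrange interpolation (the paper uses exactly this, citing the coefficient of $x^{d_i}$ in the interpolant). In fact, your explicit pigeonhole treatment of the case where some exponent $e_k$ exceeds its target $d_k$ is a point the paper's proof glosses over --- its case analysis only covers $0 \leq a_i \leq d_i$ before concluding ``else the product is $0$'' --- so your write-up is, if anything, slightly more careful on the one step where the total-degree hypothesis is genuinely needed.
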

\begin{proof}
By linearity, it is enough to consider monomials \[F(x_1,\ldots,x_r,y_1,\ldots,y_{n-r}) = x_1^{a_1}\cdots x_r^{a_r}y_1^{a_{r+1}}\cdots y_{n-r}^{a_n},\]
where $a_1+\cdots +a_n \leq d_1+\cdots +d_n$. The expression \eqref{eq1} factors as
\[\bigg( \sum_{Q_1(\alpha_1) =0}\frac{\alpha_1^{a_1}}{Q'_1(\alpha_1)} \bigg)\cdots\bigg( \sum_{Q_n(\alpha_n) =0}\frac{\alpha_n^{a_n}}{Q'_n(\alpha_n)} \bigg).\]
For each $i \in [n]$, suppose that $\gamma_{i0},\gamma_{i1}, \ldots,\gamma_{id_i}$ be $d_i+1$ distinct roots of $Q_i(x)$, then
\[\sum_{Q_i(\alpha_i)=0}\frac{\alpha_i^{a_i}}{Q_i'(\alpha_i)}=\sum_{j=0}^{d_i}\frac{\gamma_{ij}^{a_i}}{\prod_{k\neq j}(\gamma_{ij}-\gamma_{ik})}.\]
By the Lagrange interpolation formula for $x^{a_i}$ at $\gamma_{i0},\gamma_{i1}, \ldots,\gamma_{id_i}$, we have
\[x^{a_i} = \sum_{j=0}^{d_i}\frac{\gamma_{ij}^{a_i}\prod_{k\neq j}(x-\gamma_{ik})}{\prod_{k\neq j}(\gamma_{ij}-\gamma_{ik})}.\]
This implies that
\begin{eqnarray*}
\sum_{j=0}^{d_i}\frac{\gamma_{ij}^{a_i}}{\prod_{k\neq j}(\gamma_{ij}-\gamma_{ik})}= \textrm{ the coefficient of } x^{d_i} =
\begin{cases}
0 &\textrm{ if } \quad 0 \leq a_i < d_i,\\
1 &\textrm{ if } \quad a_i=d_i.
\end{cases}
\end{eqnarray*}
Hence, if $a_i = d_i$ for all $i$, then the product will be equal to $1$, else the product will be equal to $0$. The lemma follows.
\end{proof}

Recall that a polynomial $P(x_1,\ldots,x_r,y_1,\ldots,y_{n-r}) \in \mathbb Q[x_1, \ldots,x_r,y_1,\ldots,y_{n-r}]$ is said to be \emph{doubly symmetric} if for any permutation $\sigma \in \mathcal{S}_r$ and for any  permutation $\theta \in \mathcal{S}_{n-r}$, one has
$$P(x_{\sigma(1)},\ldots,x_{\sigma(r)},y_{\theta(1)},\ldots,y_{\theta(n-r)}) = P(x_1, \ldots,x_r,y_1,\ldots,y_{n-r}).$$
Let $\lambda_1,\ldots, \lambda_n$ be $n$ indeterminates. Denote by $[n] = \{1,2,\ldots,n\}$. For each subset $I=\{i_1, \ldots, i_r\}\subset [n]$, we denote by \[\lambda_I = (\lambda_{i_1},\ldots,\lambda_{i_r}) \text{ and } I^c=[n]\setminus I.\]
\begin{proposition}\label{doublysympo}
Let $P(x_1,\ldots,x_r,y_1,\ldots,y_{n-r}) \in \mathbb Q[x_1,\ldots,x_r,y_1,\ldots,y_{n-r}]$ be a doubly symmetric polynomial of degree not greater than $r(n-r)$. Then
$$\sum_{I\subset [n],\#I=r}\frac{P(\lambda_I,\lambda_{I^c})}{\prod_{i\in I, j\in I^c}(\lambda_i - \lambda_j)}=\frac{d(r,n)}{r!(n-r)!},$$
where $d(r,n)$ is the coefficient of the monomial $x_1^{n-1}\cdots x_r^{n-1}y_1^{n-1}\cdots y_{n-r}^{n-1}$ in the polynomial 
\[P(x_1,\ldots,x_r,y_1,\ldots,y_{n-r})\prod_{j \neq i}(x_i-x_j)\prod_{j \neq i}(y_i-y_j)\prod_{i=1}^{n-r}\prod_{j=1}^r(y_i - x_j).\]
\end{proposition}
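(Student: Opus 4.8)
The plan is to derive the proposition from Lemma \ref{Lem2} by collapsing all the auxiliary polynomials $Q_i$ into a single polynomial whose roots are the $\lambda_k$, and then reorganizing the resulting sum using the doubly symmetric structure. I would treat both sides as rational functions in $\lambda_1,\ldots,\lambda_n$ and fix an arbitrary specialization to pairwise distinct values; since the right-hand side is a constant and the left-hand side is a rational function, equality on the Zariski-dense locus of distinct tuples forces the identity in general. Set $Q(x)=\prod_{k=1}^{n}(x-\lambda_k)$, so that $Q'(\lambda_i)=\prod_{j\neq i}(\lambda_i-\lambda_j)$, and apply Lemma \ref{Lem2} with $Q_1=\cdots=Q_n=Q$, with $d_1=\cdots=d_n=n-1$, and with
\[
F=P\cdot\Delta,\qquad \Delta=\prod_{j\neq i}(x_i-x_j)\prod_{j\neq i}(y_i-y_j)\prod_{i=1}^{n-r}\prod_{j=1}^{r}(y_i-x_j).
\]
The degree hypothesis $\deg F\leq n(n-1)=d_1+\cdots+d_n$ holds because $\deg\Delta=n(n-1)-r(n-r)$ (a short computation from $\binom{r}{2}+\binom{n-r}{2}+r(n-r)=\binom{n}{2}$ together with $2\binom{n}{2}=n(n-1)$) and $\deg P\leq r(n-r)$. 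Lemma \ref{Lem2} then identifies $d(r,n)$ with $\sum F(\alpha_1,\ldots,\alpha_n)/\prod_i Q'(\alpha_i)$, the outer sum running over all $n$-tuples of roots of $Q$.

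The first key observation is that each factor of $\Delta$ vanishes whenever two of its arguments coincide, so $F(\alpha_1,\ldots,\alpha_n)=0$ unless $\alpha_1,\ldots,\alpha_n$ are pairwise distinct; hence only the tuples that are permutations of $(\lambda_1,\ldots,\lambda_n)$ survive, and the sum becomes $\frac{1}{\prod_i Q'(\lambda_i)}\sum_{\pi\in\mathcal S_n}F(\lambda_{\pi(1)},\ldots,\lambda_{\pi(n)})$. The second key observation is that $F$ is itself doubly symmetric: under the convention that $\prod_{j\neq i}$ runs over all ordered pairs, both discriminant-type products of $\Delta$ and the cross product are invariant under permuting the $x$-arguments among themselves and the $y$-arguments among themselves, and the given $P$ is doubly symmetric by hypothesis. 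Consequently $F(\lambda_{\pi(1)},\ldots,\lambda_{\pi(n)})$ depends only on the set $I=\{\pi(1),\ldots,\pi(r)\}$, and grouping the $n!$ permutations into the $\binom{n}{r}$ classes of common size $r!(n-r)!$ gives $\sum_\pi F=r!(n-r)!\sum_{\#I=r}F(\lambda_I,\lambda_{I^c})$.

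It then remains to simplify $F(\lambda_I,\lambda_{I^c})/\prod_i Q'(\lambda_i)$ for each $I$. I would split $\prod_i Q'(\lambda_i)=\prod_{(i,j):\,i\neq j}(\lambda_i-\lambda_j)$ into the ordered pairs lying inside $I$, inside $I^c$, and across $I$ and $I^c$. The two inside blocks cancel exactly the two discriminant-type factors of $\Delta(\lambda_I,\lambda_{I^c})$, while the cross factor $\prod_{i\in I^c,\,j\in I}(\lambda_i-\lambda_j)$ of $\Delta$ divided against the across block $\prod_{i\in I,\,j\in I^c}(\lambda_i-\lambda_j)(\lambda_j-\lambda_i)$ of $\prod_i Q'$ collapses to $\big(\prod_{i\in I,\,j\in I^c}(\lambda_i-\lambda_j)\big)^{-1}$. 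Hence $F(\lambda_I,\lambda_{I^c})/\prod_i Q'(\lambda_i)=P(\lambda_I,\lambda_{I^c})/\prod_{i\in I,\,j\in I^c}(\lambda_i-\lambda_j)$, and substituting back yields
\[
d(r,n)=r!(n-r)!\sum_{\#I=r}\frac{P(\lambda_I,\lambda_{I^c})}{\prod_{i\in I,\,j\in I^c}(\lambda_i-\lambda_j)},
\]
which is the assertion after dividing by $r!(n-r)!$. The genuinely delicate step is this last cancellation, since the across pairs appear in both orders in $\prod_i Q'$ but only in a single fixed order in the cross factor of $\Delta$; once the double symmetry of $F$ is recognized and the ordered-pair bookkeeping is done carefully, no residual sign survives and everything reduces cleanly.
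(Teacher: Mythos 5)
Your proof is correct and follows essentially the same route as the paper's: apply Lemma \ref{Lem2} with $Q_1=\cdots=Q_n=\prod_{k}(x-\lambda_k)$ and $F=P\cdot\Delta$, observe that only tuples with pairwise distinct coordinates survive and group them by double symmetry into $\binom{n}{r}$ classes of size $r!(n-r)!$, then cancel the discriminant-type and cross factors of $\Delta$ against $\prod_i Q'(\lambda_i)$ to leave $P(\lambda_I,\lambda_{I^c})/\prod_{i\in I,\,j\in I^c}(\lambda_i-\lambda_j)$. If anything, you are more careful than the paper, which leaves implicit the degree verification, the vanishing/grouping step behind its reduction of \eqref{eq1} to \eqref{eq2}, and the distinct-roots hypothesis that your Zariski-density remark addresses.
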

\begin{proof}
We apply Lemma \ref{Lem2} to the following polynomials 
\[F(x_1,\ldots,x_r,y_1,\ldots,y_{n-r}) = P(x_1,\ldots,x_r,y_1,\ldots,y_{n-r})\prod_{j \neq i}(x_i-x_j)\prod_{j \neq i}(y_i-y_j)\prod_{i=1}^{n-r}\prod_{j=1}^r(y_i - x_j),\]
and 
$$Q_1(x)=\cdots =Q_n(x) = Q(x) = \prod_{i=1}^n (x-\lambda_i).$$
By the assumption, $F(x_1,\ldots,x_r,y_1,\ldots,y_{n-r})$ is a doubly symmetric polynomial of degree not greater than $n(n-1)$ and $Q(x)$ is a monic polynomial of degree $n$ with distinct roots $\lambda_1\ldots, \lambda_n$. In this case, the expression \eqref{eq1} can be reduced to
\begin{equation}\label{eq2}
    r!(n-r)!\sum_{I\subset [n],\# I =r}\frac{F(\lambda_I,\lambda_{I^c})}{\prod_{i=1}^n Q'(\lambda_i)}.
\end{equation}
For each $I \subset [n],\# I =r$, we have
\begin{equation*}
    \begin{split}
        F(\lambda_I,\lambda_{I^c}) &= P(\lambda_I,\lambda_{I^c})\prod_{i,j\in I, j\neq i}(\lambda_i-\lambda_j)\prod_{i,j\in I^c, j\neq i}(\lambda_i-\lambda_j)\prod_{i \in I^c,j\in I}(\lambda_i-\lambda_j)\\  &= P(\lambda_I,\lambda_{I^c})\prod_{i,j\in I, j\neq i}(\lambda_i-\lambda_j)\prod_{i\in I^c, j\neq i}(\lambda_i-\lambda_j),
    \end{split}
\end{equation*}
and
\begin{equation*}
    \begin{split}
        \prod_{i=1}^n Q'(\lambda_i) &=  \prod_{j\neq i}(\lambda_i -\lambda_j) \\ &= \prod_{i\in I,j\neq i}(\lambda_i-\lambda_j)\prod_{i\in I^c,j\neq i}(\lambda_i-\lambda_j) \\ &= \prod_{i\in I, j\in I^c}(\lambda_i - \lambda_j)\prod_{i,j\in I, j\neq i}(\lambda_i-\lambda_j)\prod_{i\in I^c, j\neq i}(\lambda_i-\lambda_j).
    \end{split}
\end{equation*}
Then, the expression \eqref{eq2} can be written as
$$r!(n-r)!\sum_{I\subset [n],\# I =r}\frac{P(\lambda_I,\lambda_{I^c})}{\prod_{i\in I, j\in I^c}(\lambda_i - \lambda_j)}.$$
The identity follows.
\end{proof}

\begin{proof}[Proof of Theorem \ref{main}] As already mentioned,  Graf von Bothmer and Ranestad \cite[Proposition 4.1]{BR2009} showed that the algebraic degree $\delta(m,n,r)$ can be computed as
an intersection number on the Grassmannian $G(r,n)$, that is
\begin{equation}\label{schubert}
  \delta(m,n,r) = \int_{G(r,n)}s_{\mathscr l}(S^2U^*)s_{\mathscr k}(S^2Q),
\end{equation}
where $U$ and $Q$ are respectively the universal sub-bundle and quotient bundle on the Grassmannian $G(r,n)$, $S^2Q$ and $S^2U^*$ are respectively the second symmetric power of $Q$ and the dual of $U$, and $s_i(E)$ is the $i$-th Segre class of the dual of the vector bundle $E$.

By the formula \eqref{schubert} and the Bott residue formula in equivariant cohomology (see \cite[Proposition 9.1.5]{CK}), Hiep (see \cite[Theorem 1]{H1}) showed that
\begin{equation}\label{Hiep}
    \delta(m,n,r) = (-1)^{\mathscr l}\sum_{I\subset [n],\# I =r}\frac{A_{\mathscr l,I}A_{\mathscr k,I^c}}{T_I},
\end{equation}
where 
$$T_I = (-1)^{r(n-r)}\prod_{i\in I,j\in I^c} (\lambda_i - \lambda_j),$$
$$A_{\mathscr l,I} = \det\left(\begin{matrix}
e_{1}(\Lambda_I) & e_{2}(\Lambda_I) & e_{3}(\Lambda_I) & \cdots & e_{\mathscr l}(\Lambda_I)\\
1 & e_{1}(\Lambda_I) & e_{2}(\Lambda_I) & \cdots & e_{\mathscr l-1}(\Lambda_I)\\
0 & 1 & e_{1}(\Lambda_I) & \cdots & e_{\mathscr l-2}(\Lambda_I)\\
\vdots & \vdots & \vdots & \ddots & \vdots \\
0 & 0 & 0 & \cdots & e_{1}(\Lambda_I)
\end{matrix}\right)_{\mathscr l \times \mathscr l},$$
$$A_{\mathscr k,I^c} = \det\left(\begin{matrix}
e_{1}(\Lambda_{I^c}) & e_{2}(\Lambda_{I^c}) & e_{3}(\Lambda_{I^c}) & \cdots & e_{\mathscr k}(\Lambda_{I^c})\\
1 & e_{1}(\Lambda_{I^c}) & e_{2}(\Lambda_{I^c}) & \cdots & e_{\mathscr k-1}(\Lambda_{I^c})\\
0 & 1 & e_{1}(\Lambda_{I^c}) & \cdots & e_{\mathscr k-2}(\Lambda_{I^c})\\
\vdots & \vdots & \vdots & \ddots & \vdots \\
0 & 0 & 0 & \cdots & e_{1}(\Lambda_{I^c})
\end{matrix}\right)_{\mathscr k \times \mathscr k}.$$
Note that $e_{i}(\Lambda_I)$ is the $i$-th elementary symmetric polynomial in variables which are the elements of the set 
\[\Lambda_I = \{\lambda_i+\lambda_j \mid i,j \in I, i \leq j\}.\]
By the Jacobi-Trudi identity (see \cite[Theorem 6.1]{egge2019}), we have 
\[h_{\mathscr l}(\Lambda_I) = A_{\mathscr l,I} \quad \text{ and } \quad h_{\mathscr k}(\Lambda_{I^c}) = A_{\mathscr k,I^c}.\]
Thus, the formula \eqref{Hiep} can be rewritten as  
\[\delta(m,n,r) = (-1)^{\mathscr k} \sum_{I\subset [n],\# I =r}\frac{h_{\mathscr l}(\Lambda_I)h_{\mathscr k}(\Lambda_{I^c})}{\prod_{i\in I, j\in I^c}(\lambda_i - \lambda_j)}.\]
By Proposition \ref{doublysympo}, we get the desired identity.
\end{proof}

\section{Some facts about symmetric polynomials}

This section intends to recall some well-known facts about symmetric polynomials. In addition, we clarify some related results that will be used in the sequel. For complete and elegant treatments of symmetric polynomials, we refer to Macdonald’s book  \cite{macdonald1998symmetric}, Manivel's book \cite{manivel2001symmetric} and Egge's book \cite{egge2019}. In this section, we particularly work over the ring $\mathbb Q[x_1,\ldots,x_r]^{\mathcal{S}_r}$ of symmetric polynomials in $r$ variables $x_1,\ldots, x_r$ with rational coefficients.

Let $d$ be a positive integer, then a \emph{partition of} $d$, denoted by $\lambda \vdash d$, is a finite weakly-decreasing sequence $\lambda= (\lambda_1 \geq \lambda_2 \geq \cdots)$ of non-negative integers, such that $\sum_{i} \lambda_i = d$. The $\lambda_i$ are called the \emph{parts} of the partition. Thus, a partition of $d$ has at most $d$ non-zero parts. The number of non-zero parts is the \emph{length} of $\lambda$, denoted by $l(\lambda)$. The sum of all (non-zero) parts of $\lambda$ is the \emph{weight} of $\lambda$, denoted by $|\lambda|$, and clearly $|\lambda| = d$ if $\lambda$ is a partition of $d$. We shall find it convenient not to distinguish between two partitions of the same weight which differ only by a string of zeros at the end. We often use a notation that makes explicit the number of times that a particular integer occurs as a non-zero part. Thus if $\lambda =  (\lambda_1,\lambda_2,\ldots, \lambda_s)$ is a partition of $d$ with length $s$, we sometimes write
$\lambda = (k^{\alpha_k}, \ldots, 2^{\alpha_2}, 1^{\alpha_1})$, where $\alpha_i$ is the number of $i$ in the partition. Note that $\sum_{i = 1}^{k}i\alpha_i = d$.

For each partition $\lambda = (\lambda_1,\ldots,\lambda_r)$, the Schur polynomial is defined to be 
\[s_{\lambda}(x_1,\ldots,x_r) = \frac{a_{\lambda+\delta_r}}{a_{\delta_r}},\]
where $\delta_r = (r-1,\ldots,1,0), a_{\lambda + \delta_r} = \det(x_i^{\lambda_j + r - j})_{r \times r}$, and $a_{\delta_r} = \det(x_i^{r-j})_{r\times r}$. Note that the latter determinant is the well-known Vandermonde determinant, that is
\[a_{\delta_r} = \prod_{1\leq i < j \leq r}(x_i-x_j).\]
The Schur polynomial $s_\lambda$ is in fact a homogeneous symmetric polynomial of degree $|\lambda|$, see \cite{macdonald1998symmetric}. In particular, if $\lambda = (k)$ and $\lambda = (1^k)$ then the Schur polynomials 
\[s_{(k)}(x_1,\ldots,x_r) = h_k(x_1,\ldots,x_r) = \sum_{1 \leq i_1 \leq i_2 \leq \cdots \leq i_k \leq r}x_{i_1}x_{i_2}\ldots x_{i_k},\]
\[s_{(1^k)}(x_1,\ldots,x_r) = e_k(x_1,\ldots,x_r) = \sum_{1 \leq i_1 < i_2 < \cdots < i_k \leq r}x_{i_1}x_{i_2}\ldots x_{i_k},\]
which are respectively called the $k$-th {\it complete homogeneous symmetric polynomial}
and the $k$-th {\it elementary symmetric polynomial}.

The following lemma is useful in sequel. 

\begin{lemma}\label{schur-vandermonde_square}
Let $\lambda$ be a partition of length $l(\lambda) \leq r$. Let $n$ be a positive integer such that $n>r$. Then, the coefficient of the monomial $x_1^{n-1}\ldots x_r^{n-1}$ in the polynomial
\begin{equation}\label{schur-vandermonde_square-eq}
s_{\lambda}(x_1,\ldots,x_r)\prod_{j\neq i}(x_i - x_j)
\end{equation}
is equal to $r!$ if $\lambda = ((n-r)^r)$, and $0$ if $\lambda \neq ((n-r)^r)$.
\end{lemma}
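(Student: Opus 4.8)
The plan is to reduce the polynomial \eqref{schur-vandermonde_square-eq} to a product of two alternants and then extract the coefficient by a permutation-counting argument. First I would use the bialternant definition $s_\lambda = a_{\lambda+\delta_r}/a_{\delta_r}$ together with the observation that $\prod_{j\neq i}(x_i-x_j) = (-1)^{\binom{r}{2}}a_{\delta_r}^2$, which holds because pairing the two factors $(x_i-x_j)$ and $(x_j-x_i)$ over each of the $\binom{r}{2}$ unordered pairs contributes $-(x_i-x_j)^2$. Consequently
\[
s_\lambda(x_1,\ldots,x_r)\prod_{j\neq i}(x_i-x_j) = (-1)^{\binom{r}{2}}\,a_{\lambda+\delta_r}\,a_{\delta_r}.
\]
This rewriting cancels the Schur quotient and leaves a genuine polynomial identity to which elementary determinant expansions apply.

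Next I would expand $a_{\lambda+\delta_r} = \sum_{\sigma\in\mathcal{S}_r}\sgn(\sigma)\prod_i x_i^{\mu_{\sigma(i)}}$ and $a_{\delta_r} = \sum_{\tau\in\mathcal{S}_r}\sgn(\tau)\prod_i x_i^{r-\tau(i)}$, where $\mu_j = \lambda_j + r - j$ are the strictly decreasing parts of $\lambda+\delta_r$. Collecting the coefficient of $x_1^{n-1}\cdots x_r^{n-1}$ then amounts to summing $(-1)^{\binom{r}{2}}\sgn(\sigma)\sgn(\tau)$ over all pairs $(\sigma,\tau)$ for which $\mu_{\sigma(i)} + r - \tau(i) = n-1$ for every $i$. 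Setting $c = n-1-r \geq 0$ (using $n>r$), this constraint reads $\tau(i) = \mu_{\sigma(i)} - c$, so a valid permutation $\tau$ exists if and only if $\{\mu_1,\ldots,\mu_r\} = \{c+1,\ldots,c+r\}$. Since the $\mu_j$ strictly decrease, this consecutivity condition forces $\mu_j = c+r-j+1$, equivalently $\lambda_j = n-r$ for all $j$, that is $\lambda = ((n-r)^r)$. Hence the coefficient vanishes whenever $\lambda \neq ((n-r)^r)$.

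Finally, in the remaining case $\lambda = ((n-r)^r)$ I would evaluate the sign sum explicitly. Here $\mu_j = n-j$, so the constraint gives $\tau(i) = r+1-\sigma(i)$; writing $w_0$ for the order-reversing permutation $j\mapsto r+1-j$, this says $\tau = w_0\circ\sigma$, whence $\sgn(\sigma)\sgn(\tau) = \sgn(w_0) = (-1)^{\binom{r}{2}}$ for every $\sigma$. Each of the $r!$ permutations $\sigma$ therefore contributes the same sign, and the coefficient equals $(-1)^{\binom{r}{2}}\cdot r!\cdot(-1)^{\binom{r}{2}} = r!$, as claimed.

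The argument is essentially a bookkeeping computation, so I anticipate no serious obstacle; the one point demanding care is the coefficient extraction, namely verifying that the exponent-matching constraint $\mu_{\sigma(i)}+r-\tau(i)=n-1$ admits solutions only for the rectangular partition $((n-r)^r)$, and that in that case the induced correspondence $\sigma\mapsto w_0\sigma$ makes every surviving term carry the same sign. It is also worth recording the weight check $|\lambda| = r(n-r)$, obtained by summing the constraint over $i$, which confirms consistency with the degree of the target monomial.
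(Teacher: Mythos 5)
Your proof is correct, and it splits into two halves that compare differently with the paper's. For the vanishing statement ($\lambda \neq ((n-r)^r)$) you follow essentially the same route as the paper: rewrite \eqref{schur-vandermonde_square-eq} as $(-1)^{\binom{r}{2}}a_{\lambda+\delta_r}a_{\delta_r}$, expand both alternants over permutations, and observe that matching all exponents to $n-1$ forces $\{\mu_1,\ldots,\mu_r\}$ to be the block of $r$ consecutive integers $\{n-r,\ldots,n-1\}$, hence $\lambda=((n-r)^r)$; your consecutivity argument and the paper's max/min argument are interchangeable. Where you genuinely diverge is the rectangular case: the paper exits the permutation expansion, factors $s_{((n-r)^r)}=(x_1\cdots x_r)^{n-r}$, and reduces to the classical fact that the coefficient of $x_1^{r-1}\cdots x_r^{r-1}$ in $\prod_{j\neq i}(x_i-x_j)$ is $r!$, which it cites from \cite{Ze}; you instead stay inside the expansion and count signs directly, noting that the constraint pins $\tau = w_0\circ\sigma$ so that every one of the $r!$ surviving pairs carries the sign $\sgn(w_0)=(-1)^{\binom{r}{2}}$, which cancels the prefactor. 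This makes your proof self-contained (in effect it re-proves the constant-term fact the paper outsources), at the cost of a slightly longer computation; the paper's version is shorter but rests on the external citation. One small point in your favor: your sign $(-1)^{\binom{r}{2}}$ in the identity $\prod_{j\neq i}(x_i-x_j)=(-1)^{\binom{r}{2}}a_{\delta_r}^2$ is the correct one, whereas the paper writes $(-1)^{r(r+1)/2}$, an apparent typo that is harmless there because the sign plays no role in its vanishing argument.
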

\begin{proof}
We first prove that if $\lambda \neq ((n-r)^r)$, then the polynomial \eqref{schur-vandermonde_square-eq} has no the monomial $x_1^{n-1}\cdots x_r^{n-1}$ in its expansion. Indeed, we have
\begin{eqnarray*}
s_{\lambda}(x_1,\ldots,x_r)\prod_{j\neq i}(x_i - x_j) & = & (-1)^{\frac{r(r+1)}{2}}a_{\lambda + \delta_r}(x_1,\ldots,x_r)a_{\delta_r}(x_1,\ldots,x_r)\\
& = & (-1)^{\frac{r(r+1)}{2}} \det(x_i^{\lambda_j+r-j})_{r\times r}\det(x_i^{r-j})_{r\times r}\\
& = & (-1)^{\frac{r(r+1)}{2}} \left( \sum_{\sigma\in \mathcal{S}_r}\sgn(\sigma)x^{\sigma(\lambda+\delta_r)} \right) \left( \sum_{\theta\in \mathcal{S}_r}\sgn(\theta)x^{\theta(\delta_r)} \right)\\
& = & (-1)^{\frac{r(r+1)}{2}} \left( \sum_{\sigma,\theta \in \mathcal{S}_r}\sgn(\sigma)\sgn(\theta)x^{\sigma(\lambda+\delta_r) + \theta(\delta_r)} \right).
\end{eqnarray*}
Suppose that $x_1^{n-1}\cdots x_r^{n-1}$ is a term of the polynomial \eqref{schur-vandermonde_square-eq}, then there exist a partition $\lambda = (\lambda_1, \ldots,\lambda_r)$ such that $\sigma(\lambda+\delta_r) + \theta(\delta_r) = ((n-1)^r)$ for some $\sigma,\theta \in \mathcal{S}_r$. Since $\delta_r =(r-1,r-2, \ldots,1,0)$, it follows $\sigma(\lambda+\delta_r)$ is a permutation of the set $\{n-r,\ldots, n-1\}$. Thus, the maximal (resp. minimal) term in $\sigma(\lambda+\delta_r)$, that is $\lambda_1 +r-1$ (resp. $\lambda_r$), must be $
n-1$ (resp. $n-r$). Hence, $\lambda_1=n-r$ and $\lambda_r = n-r$. Thus $\lambda = ((n-r)^r)$, a contradiction.

Now, let us suppose that $\lambda = ((n-r)^r)$. It is obvious that
\begin{eqnarray*}
s_{((n-r)^r)}(x_1,\ldots,x_r)\prod_{j\neq i}(x_i - x_j) & = & e^{n-r}_r(x_1,\ldots,x_r)\prod_{j\neq i}(x_i - x_j)\\
& = & \prod_{i=1}^r x^{n-r}_i \prod_{j\neq i}(x_i - x_j).
\end{eqnarray*}
It turns out that the coefficient of the monomial $x_1^{n-1} \ldots x_r^{n-1}$ in the polynomial \eqref{schur-vandermonde_square-eq} in corresponding to $\lambda = ((n-r)^r)$ is the coefficient of the monomial $x_1^{r-1}\ldots x_r^{r-1}$ in $\prod_{j\neq i}(x_i - x_j)$. Moreover, it was proved in \cite{Ze} that $r!$ is the coefficient of the monomial $x_1^{r-1}\ldots x_r^{r-1}$ in $\prod_{j\neq i}(x_i - x_j)$.
\end{proof}

Consider the infinite Pascal triangle matrix
\[P = \left(\begin{array}{cccccccccc}
1          & 0         & 0         & 0                   & 0                                 &\ldots    \\
1          & 1         & 0         & 0                   & 0                                 &\ldots    \\
1          & 2         & 1         & 0                   & 0                                 &\ldots    \\
1          & 3         & 3         & 1                   & 0                                 &\ldots    \\
1          & 4         & 6         & 4                   & 1                                 &\ldots    \\
\vdots & \vdots & \vdots & \vdots           & \vdots                        & \ddots   \\
\end{array}\right),\]
where $P_{ij} = \binom{i}{j}$. For every pair of finite subset $I,J \subset \mathbb N$, denote by $M_{IJ}$ the submatrix of $P$ with rows indexed by $I$ and columns indexed by $J$. For each finite subset $I = \{i_1,\ldots,i_r\} \subset \mathbb N$, we denote
\begin{equation}\label{psi_deter}
\psi_I = \sum_{J\subset \mathbb N, \#J=r} \det(M_{I,J}),
\end{equation}
and
\begin{equation}
\lambda(I) = (i_r - (r-1),\ldots, i_1).
\end{equation}
The next lemma is a consequence of  Proposition A.15 in \cite{LLT}.
\begin{lemma}\label{Prop.A.15} Let $\mathcal{X}$ be the set mentioned in \eqref{sum_variables} and $d$ a positive integer. Then, we have
\begin{equation}\label{complete_pol}
h_d(\mathcal{X})=\sum_{I \subset \mathbb N, \#I =r, |\lambda(I)|=d}\psi_Is_{\lambda(I)}(x_1,\ldots,x_r).
\end{equation}
\end{lemma}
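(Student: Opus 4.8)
The plan is to reduce the identity to a single coefficient extraction via the bialternant (Weyl) formula for Schur polynomials, and then to recognize the resulting alternating sum as the Pascal-minor sum $\psi_I$. Since $h_d(\mathcal X)$ is a symmetric polynomial of degree $d$ in $x_1,\dots,x_r$, it has a unique expansion $h_d(\mathcal X)=\sum_{\lambda}c_\lambda\,s_\lambda(x_1,\dots,x_r)$ over partitions $\lambda\vdash d$ with $l(\lambda)\le r$, and these $\lambda$ correspond bijectively to the index sets $I$ on the right-hand side through $\lambda=\lambda(I)$ (the inverse sends $\lambda$ to $i_k=\lambda_{r+1-k}+(k-1)$). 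Thus it suffices to prove $c_{\lambda(I)}=\psi_I$ for every such $I$.

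First I would multiply through by the Vandermonde $a_{\delta_r}=\prod_{i<j}(x_i-x_j)$ and use $s_\lambda=a_{\lambda+\delta_r}/a_{\delta_r}$ to obtain $a_{\delta_r}\,h_d(\mathcal X)=\sum_\lambda c_\lambda\,a_{\lambda+\delta_r}$. Because each $a_\mu$ with $\mu$ strictly decreasing contains $x^\mu$ with coefficient $1$, and two distinct strict $\mu$ never share a dominant monomial, $c_{\lambda(I)}$ equals the coefficient of $x^{\lambda(I)+\delta_r}$ in $a_{\delta_r}\,h_d(\mathcal X)$. The key simplification is the identity $\lambda(I)+\delta_r=(i_r,i_{r-1},\dots,i_1)$, immediate from $\lambda(I)=(i_r-(r-1),\dots,i_1)$ and $\delta_r=(r-1,\dots,0)$. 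Hence $c_{\lambda(I)}$ is exactly the coefficient of $x_1^{i_r}x_2^{i_{r-1}}\cdots x_r^{i_1}$ in $a_{\delta_r}\,h_d(\mathcal X)$, reducing the problem to a concrete monomial extraction.

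Next I would make the Pascal entries appear. Writing $a_{\delta_r}h_d(\mathcal X)$ as the antisymmetrization $\sum_{\sigma\in\mathcal S_r}\sgn(\sigma)\,\sigma\!\big(x^{\delta_r}h_d(\mathcal X)\big)$ and expanding $h_d(\mathcal X)$ through its generating function $\prod_{p\le q}(1-(x_p+x_q))^{-1}$ via the binomial theorem $(1-x_p-x_q)^{-1}=\sum_{a,b}\binom{a+b}{a}x_p^a x_q^b$ introduces the binomial coefficients, i.e. the entries $P_{ij}=\binom ij$ of the Pascal matrix, into every monomial coefficient. Collecting the contributions to the target monomial and carrying out the alternating sum over $\mathcal S_r$ should collapse, by the Lindström–Gessel–Viennot lemma (equivalently Cauchy–Binet applied to the row block $M_{I,\bullet}$ of $P$), into the sum of $r\times r$ Pascal minors $\sum_J\det(M_{I,J})=\psi_I$, the sum over column sets $J$ recording the free endpoints of the underlying lattice-path families. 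The cases $r=1$, where $c_{(d)}=2^d=\sum_j\binom dj=\psi_{\{d\}}$, and $r=2$ already exhibit this mechanism and serve as a useful check.

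I expect the main obstacle to be precisely this last collapse: verifying that the antisymmetrized, binomially expanded sum reorganizes exactly into the signed sum of Pascal minors $\psi_I$, including the bookkeeping that makes the sum over $J$ finite (which holds since $\det(M_{I,J})=0$ unless $\max J\le i_r$). This is the combinatorial heart of the statement and is exactly the content abstracted in \cite[Proposition A.15]{LLT}. Accordingly, one may either carry out the Lindström–Gessel–Viennot computation by hand or, more economically, check that $h_d(\mathcal X)$ fits the hypotheses of that proposition with the Pascal matrix as transition matrix and invoke it directly to conclude $c_{\lambda(I)}=\psi_I$, which is the desired identity.
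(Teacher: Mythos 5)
Your proposal is correct and ultimately takes the same route as the paper: the paper offers no independent proof of this lemma, deriving it solely as a consequence of Proposition A.15 in \cite{LLT}, which is exactly the citation your final paragraph invokes after checking that $h_d(\mathcal{X})$ fits its hypotheses with the Pascal matrix as transition matrix. Your preliminary bialternant reduction (that $c_{\lambda(I)}$ is the coefficient of $x_1^{i_r}\cdots x_r^{i_1}$ in $a_{\delta_r}\,h_d(\mathcal{X})$) is sound, and your candid admission that the Lindström--Gessel--Viennot collapse is left unverified does not create a gap relative to the paper, since the operative step in both cases is the direct appeal to \cite[Proposition A.15]{LLT}.
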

\begin{example}
For $d = 2, r = 2$, we have
\begin{align*}
    h_2(2x_1,x_1+x_2,2x_2) &= 7x_1^2 + 10x_1x_2 + 7x_2^2\\
    &= 7s_{(2)} + 3s_{(1^2)}.
\end{align*}
\end{example}

Using above lemmas, we prove the following results.

\begin{proposition}\label{key_lm}  
Let $\mathcal{X}$ be the set mentioned in \eqref{sum_variables}. Then, the coefficient of the monomial $x_1^{r}\cdots x_{r}^{r}$ in the polynomial
\begin{equation}\label{e_h_V1}
e_{k}(x_1,\ldots,x_{r})h_{r-k}(\mathcal{X})\prod_{j \neq i}(x_i-x_j)
\end{equation}
is equal to $\binom{r+1}{k+1}r!$.
\end{proposition}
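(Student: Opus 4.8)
The plan is to expand the symmetric polynomial $e_k(x_1,\ldots,x_r)h_{r-k}(\mathcal X)$ in the Schur basis and then read off the coefficient of $x_1^r\cdots x_r^r$ using Lemma \ref{schur-vandermonde_square}. Since $e_k\,h_{r-k}(\mathcal X)$ is a homogeneous symmetric polynomial of degree $r$ in the $r$ variables $x_1,\ldots,x_r$, I would write $e_k\,h_{r-k}(\mathcal X)=\sum_{\mu\vdash r,\, l(\mu)\le r} c_\mu\, s_\mu(x_1,\ldots,x_r)$. Applying Lemma \ref{schur-vandermonde_square} with $n=r+1$ (so that $x_1^{n-1}\cdots x_r^{n-1}=x_1^r\cdots x_r^r$ and $((n-r)^r)=(1^r)$), every summand with $\mu\neq(1^r)$ contributes $0$ to the coefficient of $x_1^r\cdots x_r^r$ in \eqref{e_h_V1}, while $\mu=(1^r)$ contributes $r!$. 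Hence the sought coefficient equals $c_{(1^r)}\cdot r!$, and the whole problem reduces to computing the coefficient of $s_{(1^r)}$ in $e_k\,h_{r-k}(\mathcal X)$.

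To find $c_{(1^r)}$, I would first expand $h_{r-k}(\mathcal X)=\sum_{I}\psi_I\, s_{\lambda(I)}(x_1,\ldots,x_r)$ by Lemma \ref{Prop.A.15}, the sum being over $r$-subsets $I\subset\mathbb N$ with $|\lambda(I)|=r-k$, and then multiply by $e_k=s_{(1^k)}$. By Pieri's rule, $s_{(1^k)}\,s_{\lambda(I)}=\sum_\nu s_\nu$, where $\nu$ ranges over the partitions obtained from $\lambda(I)$ by adding a vertical $k$-strip; since $(1^r)$ is a single column, $\nu=(1^r)$ occurs (exactly once) if and only if $\lambda(I)=(1^{r-k})$. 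Therefore $c_{(1^r)}=\psi_{I_0}$, where $I_0$ is the unique $r$-subset with $\lambda(I_0)=(1^{r-k})$. Unwinding the definition $\lambda(I)=(i_r-(r-1),\ldots,i_1)$, one checks directly that $I_0=\{0,1,\ldots,r\}\setminus\{k\}$.

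It then remains to evaluate $\psi_{I_0}=\sum_{J}\det(M_{I_0,J})$. Because every element of $I_0$ is at most $r$ and $\binom{i}{j}=0$ for $j>i$, any column index $j>r$ produces a zero column, so only the $r$-subsets $J\subset\{0,1,\ldots,r\}$ contribute; these are exactly $J=\{0,1,\ldots,r\}\setminus\{l\}$ for $l=0,1,\ldots,r$. For such $I_0$ and $J$ the minor $\det(M_{I_0,J})$ is, up to the standard cofactor sign, the $(k,l)$-cofactor of the finite lower-triangular Pascal matrix $P^{(r+1)}=(\binom{i}{j})_{0\le i,j\le r}$. Since $\det P^{(r+1)}=1$ and its inverse is the signed Pascal matrix $((-1)^{i+j}\binom{i}{j})_{0\le i,j\le r}$, the cofactor formula gives $\det(M_{I_0,J})=\binom{l}{k}$, the two sign factors cancelling. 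Summing over $l$ and using the hockey-stick identity $\sum_{l=0}^{r}\binom{l}{k}=\binom{r+1}{k+1}$ yields $\psi_{I_0}=\binom{r+1}{k+1}$, whence the coefficient of $x_1^r\cdots x_r^r$ in \eqref{e_h_V1} equals $\binom{r+1}{k+1}r!$, as claimed.

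The one step that requires genuine care is the evaluation of $\psi_{I_0}$: its definition presents it as a sum of a priori infinitely many minors of the infinite Pascal matrix, and the crux is to recognize that it collapses to a finite sum of cofactors of $P^{(r+1)}$. I expect this identification, together with the cancellation of signs via the explicit inverse Pascal matrix, to be the main obstacle; once in place, everything reduces to the elementary hockey-stick identity. As a consistency check, the extreme cases $k=0$ and $k=r$ give $\psi_{I_0}=r+1$ and $\psi_{I_0}=1$, matching $\binom{r+1}{1}$ and $\binom{r+1}{r+1}$, and the case $r=2$ reproduces the coefficients $6,6,2$ for $k=0,1,2$ obtained by direct expansion.
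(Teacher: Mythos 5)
Your proof is correct and follows essentially the same route as the paper: expand $h_{r-k}(\mathcal X)$ via Lemma \ref{Prop.A.15}, use the Pieri rule together with Lemma \ref{schur-vandermonde_square} to reduce the problem to computing $\psi_{I_0}$ for $I_0=\{0,1,\ldots,r\}\setminus\{k\}$, and finish with the hockey-stick identity. The only difference is in evaluating the minors $\det(M_{I_0,J})$: the paper simply asserts their values, whereas you derive them from the cofactor formula and the signed inverse of the finite Pascal matrix, which in fact supplies a justification for a step the paper leaves unproved.
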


\begin{proof} 
Let us set $x=(x_1,\ldots,x_{r})$. Then, we have
\begin{eqnarray*}
  e_{k}(x)h_{r-k}(\mathcal{X}) &=& e_{k}(x)\sum_{I \subset \mathbb N, \# I =r, |\lambda(I)| = r-k}\psi_Is_{\lambda(I)}(x)\\
  &=& \sum_{I \subset \mathbb N, \# I =r, |\lambda(I)| = r-k}\psi_I\big(s_{\lambda(I)}(x)e_{k}(x)\big)\\
  &=& \sum_{I \subset \mathbb N, \# I =r, |\lambda(I)| = r-k}\psi_I\bigg(\sum_{\gamma}s_{\gamma}(x)\bigg).
\end{eqnarray*}
The latter equality is due to the Pieri rule (see, for example, \cite[Proposition 1.2.5]{manivel2001symmetric}). Moreover, by Lemma \ref{schur-vandermonde_square}, one has the coefficient of the monomial $x_1^{r}\cdots x_{r}^{r}$ in the polynomial \eqref{e_h_V1} must be $\psi_{I} r!$ with $I = [[r]] \setminus \{k\} = \{0,1,\ldots,k-1,k+1,\ldots,r-1,r\}$. It remains to prove that 
\[\psi_{I} = \binom{r+1}{k+1}.\]
Obviously, $I$ is obtained by removing an element from the set $[[r]]$, then only those sets $J$, in expression \eqref{psi_deter}, which are subsets of $[[r]]$ will make $\det(M_{I,J}) \neq 0$, otherwise $\det(M_{I,J}) =0$. In particular, we have $I=[[r]] \setminus \{k\}$ and 
$$
\begin{cases}
\textrm{if } J \nsubset [[r]], \textrm{ then } \det(M_{I,J}) = 0,\\
\textrm{if } J = [[r]]\setminus \{a\} \textrm{ for some } a<k, \textrm{ then } \det(M_{I,J}) = 0,\\
\textrm{if } J = [[r]]\setminus \{b\} \textrm{ for some } b\geq k, \textrm{ then } \det(M_{I,J}) = \binom{b}{k}.\\
\end{cases}
$$
It follows
\begin{eqnarray*}
\psi_I = \sum_{J\subset \mathbb N,\#J=r} \det(M_{I,J})
= \sum_{b=k}^{r}\binom{b}{k}
= \binom{r+1}{k+1}.
\end{eqnarray*}
The last equality is due to the hockey-stick identity. 
\end{proof}

\begin{proposition}\label{key_lm2}
Let $\mathcal{X}$ be the set mentioned in \eqref{sum_variables}. Then, the coefficient of the monomial $x_1^{r+1}\cdots x_{r}^{r+1}$ in the polynomial
\begin{equation}\label{eG}
e_k(x_1,\ldots,x_r)h_{2r-k}(\mathcal{X})\prod_{j \neq i}(x_i-x_j)
\end{equation}
is equal to $(k+1)\binom{r+3}{k+3}r!$.
\end{proposition}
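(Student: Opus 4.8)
The plan is to follow the same strategy as in the proof of Proposition \ref{key_lm}, combining Lemma \ref{Prop.A.15}, the Pieri rule, and Lemma \ref{schur-vandermonde_square} to reduce the statement to the evaluation of a single coefficient $\psi_I$.

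Writing $x = (x_1,\ldots,x_r)$, I would first expand $h_{2r-k}(\mathcal{X}) = \sum_I \psi_I s_{\lambda(I)}(x)$ by Lemma \ref{Prop.A.15}, the sum being over $I \subset \mathbb N$ with $\#I = r$ and $|\lambda(I)| = 2r-k$. Multiplying by $e_k(x)$ and applying the Pieri rule gives $e_k(x)s_{\lambda(I)}(x) = \sum_\gamma s_\gamma(x)$, where $\gamma$ runs over the partitions obtained from $\lambda(I)$ by adding a vertical $k$-strip. Since the target monomial is $x_1^{r+1}\cdots x_r^{r+1}$, I invoke Lemma \ref{schur-vandermonde_square} with $n = r+2$: after multiplying by $\prod_{j\neq i}(x_i-x_j)$ and extracting the coefficient, only $\gamma = (2^r)$ survives, contributing $r!$. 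The partitions that matter are therefore those $\lambda(I)$ for which $(2^r)/\lambda(I)$ is a vertical $k$-strip; each part of such a $\lambda(I)$ must lie in $\{1,2\}$ and $|\lambda(I)| = 2r-k$, so there is a unique one, namely $\lambda(I) = (2^{r-k},1^k)$, and it yields $(2^r)$ with multiplicity one. Under the bijection $\lambda(I)_p = i_{r+1-p} - (r-p)$ this partition corresponds to $I = \{1,2,\ldots,r+1\}\setminus\{k+1\}$, equivalently $I = [[r+1]]\setminus\{0,k+1\}$ with $[[r+1]] = \{0,1,\ldots,r+1\}$. Consequently the sought coefficient equals $\psi_I\, r!$, and it remains to prove that $\psi_I = (k+1)\binom{r+3}{k+3}$.

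The evaluation of $\psi_I = \sum_J \det(M_{I,J})$ is the technical heart, and I expect it to be the main obstacle: here the computation is genuinely harder than in Proposition \ref{key_lm}, where $I$ had codimension one in $[[r]]$ and each nonzero minor was a single binomial coefficient, whereas now $I$ has codimension two in $[[r+1]]$. Since $\det(M_{I,J}) \neq 0$ forces $J \subseteq \{0,1,\ldots,r+1\}$, I may write $J = [[r+1]]\setminus\{c,d\}$ with $0 \le c < d \le r+1$, so that $J^c = \{c,d\}$ and $I^c = \{0,k+1\}$ are both two-element sets. The plan is to evaluate $\det(M_{I,J})$ through Jacobi's complementary-minor identity applied to the Pascal matrix on $\{0,\ldots,r+1\}$, which has determinant $1$ and whose inverse is the signed Pascal matrix $(P^{-1})_{ij} = (-1)^{i-j}\binom{i}{j}$. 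This turns the large $r\times r$ minor into a $2\times 2$ minor with rows $\{c,d\}$ and columns $\{0,k+1\}$ of $P^{-1}$; once the signs from Jacobi's formula cancel those coming from $P^{-1}$, one is left with the clean value
\[\det(M_{I,J}) = \binom{d}{k+1} - \binom{c}{k+1}.\]

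Finally I would sum over all pairs $c < d$. Collecting terms by the common index $m$ gives
\[\psi_I = \sum_{0 \le c < d \le r+1}\left(\binom{d}{k+1} - \binom{c}{k+1}\right) = \sum_{m=0}^{r+1}(2m-r-1)\binom{m}{k+1},\]
and then the hockey-stick identity $\sum_{m}\binom{m}{k+1} = \binom{r+2}{k+2}$ together with $m\binom{m}{k+1} = (k+2)\binom{m}{k+2} + (k+1)\binom{m}{k+1}$ reduces this to a combination of $\binom{r+2}{k+2}$ and $\binom{r+2}{k+3}$. A short manipulation using Pascal's rule and the absorption identity $(k+3)\binom{r+2}{k+3} = (r-k)\binom{r+2}{k+2}$ then collapses everything to $(k+1)\binom{r+3}{k+3}$, which finishes the proof. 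I expect the sign bookkeeping in Jacobi's identity to be the most error-prone point, but the parities cooperate: $(r+1)(r+2)$ is even and the remaining signs pair up as $(-1)^{2(c+d)} = 1$.
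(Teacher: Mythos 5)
Your proposal is correct and follows essentially the same route as the paper's proof: expand $h_{2r-k}(\mathcal{X})$ via Lemma \ref{Prop.A.15}, apply the Pieri rule, use Lemma \ref{schur-vandermonde_square} to isolate the unique partition $\lambda(I)=(2^{r-k},1^k)$ with $I=\{1,\ldots,k,k+2,\ldots,r+1\}$, and then evaluate $\psi_I=\sum_{0\le c<d\le r+1}\bigl(\binom{d}{k+1}-\binom{c}{k+1}\bigr)=(k+1)\binom{r+3}{k+3}$. The only differences are cosmetic improvements on your side: you actually justify the minor evaluation $\det(M_{I,J})=\binom{d}{k+1}-\binom{c}{k+1}$ via Jacobi's complementary-minor identity (which the paper asserts without proof), and you organize the final binomial summation by the weights $(2m-r-1)$ rather than the paper's double sum collapsing to $(k+1)\sum_{j=1}^{r+1}\binom{j+1}{k+2}$; both computations are correct.
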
\label{lemGen}
\begin{proof}
Let $x = (x_1,\ldots,x_{r})$. We have
\begin{eqnarray*}
e_k(x)h_{2r-k}(\mathcal{X}) &=& e_k(x)\sum_{I \subset \mathbb N, \#I=r,|\lambda(I)|= 2r-k}\psi_Is_{\lambda(I)}(x)\\
&=& \sum_{I \subset \mathbb N, \#I=r,|\lambda(I)|= 2r-k}\psi_I\big(s_{\lambda(I)}(x)e_k(x)\big)\\
&=& \sum_{I \subset \mathbb N, \#I=r,|\lambda(I)|= 2r-k}\psi_I\bigg(\sum_{\gamma}s_{\gamma}(x)\bigg).
\end{eqnarray*}
The latter equality is due to the Pieri rule. By Lemma \ref{schur-vandermonde_square}, the coefficient of the monomial $x_1^{r+1}\cdots x_{r}^{r+1}$ in the polynomial (\ref{eG}) must be $\psi_{I}r!$ with $\lambda(I) = (2^{r-k},1^k)$, that is
\[I = (1,\ldots,k,k+2,\ldots,r+1).\]
For each $J = [[r+1]]\setminus \{i,j\}$ with $0\leq i < j\leq r+1$, we have
\[\det(M_{I,J}) = \binom{j}{k+1}-\binom{i}{k+1}.\]
Thus,
\begin{align*}
    \psi_{I} &= \sum_{J\subset \mathbb N, \#J=r}\det(M_{I,J})\\
    &= \sum_{0\leq i<j\leq r+1}\left(\binom{j}{k+1}-\binom{i}{k+1}\right)\\
    &= \sum_{j=1}^{r+1}\sum_{i=0}^{j-1}\left(\binom{j}{k+1}-\binom{i}{k+1}\right)\\
    &=  (k+1)\sum_{j=1}^{r+1}\binom{j+1}{k+2}\\
    &= (k+1)\binom{r+3}{k+3}.
\end{align*}
The last equality is due to the hockey-stick identity.
\end{proof}

\section{Examples and applications}
\begin{proposition}[cf. Proposition 9 in \cite{NRS}]\label{duality_relation}
The algebraic degree of semidefinite programming, $\delta(m,n,r)$, satisfies the duality relation
\begin{equation}\label{dual_form}
    \delta(m,n,r) = \delta \left(\binom{n+1}{2}-m,n,n-r \right).
\end{equation}
\end{proposition}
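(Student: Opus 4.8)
The plan is to apply Theorem \ref{main} to both triples and to show that the two coefficient-extraction problems are related by a single change of variables together with a sign bookkeeping. Write $m' = \binom{n+1}{2} - m$, $n' = n$, and $r' = n - r$, and let $\mathscr k', \mathscr l'$ be the quantities from \eqref{kl} attached to $(m', n', r')$. Since $n' - r' = r$, a direct computation gives $\mathscr k' = m' - \binom{r+1}{2} = \binom{n+1}{2} - \binom{r+1}{2} - m = \mathscr l$, and similarly $\mathscr l' = m - \binom{n-r+1}{2} = \mathscr k$; that is, passing to the dual triple merely interchanges $\mathscr k$ and $\mathscr l$. The same substitution also shows that $(m', n', r')$ again satisfies Pataki's inequalities \eqref{pataki}, so $\delta(m', n', r')$ is well defined and Theorem \ref{main} applies to it.

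Next I would write out the formula of Theorem \ref{main} for $\delta(m', n', r')$ in variables $x'_1, \ldots, x'_{n-r}, y'_1, \ldots, y'_{r}$, with associated sets $\mathcal X' = \{x'_i + x'_j : 1 \le i \le j \le n-r\}$ and $\mathcal Y' = \{y'_i + y'_j : 1 \le i \le j \le r\}$ and homogeneous degrees $\mathscr l' = \mathscr k$, $\mathscr k' = \mathscr l$. The key step is the substitution $x'_i = y_i$ for $1 \le i \le n-r$ and $y'_j = x_j$ for $1 \le j \le r$. Under this relabeling $\mathcal X'$ becomes $\mathcal Y$ and $\mathcal Y'$ becomes $\mathcal X$, so $h_{\mathscr l'}(\mathcal X') = h_{\mathscr k}(\mathcal Y)$ and $h_{\mathscr k'}(\mathcal Y') = h_{\mathscr l}(\mathcal X)$; the two products $\prod_{j \ne i}(x'_i - x'_j)$ and $\prod_{j \ne i}(y'_i - y'_j)$ turn into $\prod_{j \ne i}(y_i - y_j)$ and $\prod_{j \ne i}(x_i - x_j)$ with no sign change, being each a relabeling of a single group of variables; and the target monomial $(x'_1)^{n-1}\cdots(x'_{n-r})^{n-1}(y'_1)^{n-1}\cdots(y'_r)^{n-1}$ becomes exactly $y_1^{n-1}\cdots y_{n-r}^{n-1}x_1^{n-1}\cdots x_r^{n-1}$, which governs $c(m,n,r)$.

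The only factor that is not invariant is the mixed product, which in the dual formula reads $\prod_{i=1}^{r}\prod_{j=1}^{n-r}(y'_i - x'_j)$ and, after substitution, equals $\prod_{i=1}^{r}\prod_{j=1}^{n-r}(x_i - y_j)$. Reversing each of the $r(n-r)$ factors yields $(-1)^{r(n-r)}\prod_{i=1}^{n-r}\prod_{j=1}^{r}(y_i - x_j)$, i.e. $(-1)^{r(n-r)}$ times the mixed product appearing in the formula for $c(m,n,r)$. Hence the whole polynomial defining $c(m', n', r')$ is $(-1)^{r(n-r)}$ times the one defining $c(m,n,r)$, and extracting the coefficient of the same monomial gives $c(m', n', r') = (-1)^{r(n-r)} c(m,n,r)$.

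To finish, I would substitute into Theorem \ref{main}: since $\mathscr k' = \mathscr l$, $r' = n-r$, and $n'-r' = r$,
$$\delta(m', n', r') = (-1)^{\mathscr l}\frac{c(m',n',r')}{(n-r)!\,r!} = (-1)^{\mathscr l + r(n-r)}\frac{c(m,n,r)}{r!\,(n-r)!}.$$
By the relation $\mathscr k + \mathscr l = r(n-r)$ recorded after \eqref{kl}, the exponent satisfies $\mathscr l + r(n-r) \equiv \mathscr k \pmod 2$, so the right-hand side equals $(-1)^{\mathscr k} c(m,n,r)/\bigl(r!(n-r)!\bigr) = \delta(m,n,r)$, which is \eqref{dual_form}. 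The argument is mostly bookkeeping; the one point requiring care is tracking the sign $(-1)^{r(n-r)}$ from the mixed product and verifying that it exactly offsets the parity swap $\mathscr k \leftrightarrow \mathscr l$, which is precisely where $\mathscr k + \mathscr l = r(n-r)$ enters.
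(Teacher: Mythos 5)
Your proof is correct and follows essentially the same route as the paper: apply Theorem \ref{main} to the dual triple, note that $\mathscr k$ and $\mathscr l$ swap, and observe that the mixed product $\prod(y_i-x_j)$ contributes the sign $(-1)^{r(n-r)}$, which cancels against the parity swap via $\mathscr k+\mathscr l=r(n-r)$. Your write-up is in fact slightly more careful than the paper's, since you also verify that the dual triple satisfies Pataki's inequalities and make the variable relabeling fully explicit.
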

\begin{proof}
By Theorem \ref{main}, we have 
\begin{equation*}
    \delta\left(\binom{n+1}{2}-m,n,n-r\right) = (-1)^{\mathscr l}\frac{c\left(\binom{n+1}{2}-m,n,n-r\right)}{r!(n-r)!},
\end{equation*}
where $c\left(\binom{n+1}{2}-m,n,n-r\right)$ is the coefficient of the monomial $y_1^{n-1}\ldots y_{n-r}^{n-1}x_1^{n-1}\ldots x_r^{n-1}$ in the polynomial
\[h_{\mathscr k}(\mathcal Y)h_{\mathscr l}(\mathcal X)\prod_{j \neq i}^{n-r}(y_i-y_j)\prod_{j \neq i}^{r}(x_i-x_j)\prod_{i=1}^{r}\prod_{j=1}^{n-r}(x_i - y_j).\]
Note that 
\[\prod_{i=1}^{r}\prod_{j=1}^{n-r}(x_i - y_j) = (-1)^{r(n-r)} \prod_{i=1}^{r}\prod_{j=1}^{n-r}(y_j - x_i),\]
and 
\[\mathscr l + \mathscr k = r(n-r).\]
Thus, we have 
\[c\left(\binom{n+1}{2}-m,n,n-r\right) = (-1)^{\mathscr l + \mathscr k}c(m,n,r).\]
The proposition is proved.
\end{proof}

\begin{proposition}[cf. Theorem 11.1 in \cite{NRS}]\label{n-1} When the optimal rank $r$ equals $n-1$, the algebraic degree of semidefinite programming is
$$\delta(m,n,n-1) = 2^{m-1}\binom{n}{m}.$$
\end{proposition}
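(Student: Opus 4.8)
The plan is to specialize Theorem \ref{main} to the case $r = n-1$ and to exploit the fact that there is then only a single $y$-variable, which collapses most of the defining polynomial and reduces the coefficient extraction to a direct application of Proposition \ref{key_lm}.

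First I would record the relevant parameters. With $r = n-1$ and $n - r = 1$, Pataki's inequalities \eqref{pataki} read $1 \le m \le n$, and the quantities in \eqref{kl} become $\mathscr{k} = m - 1$ and $\mathscr{l} = n - m$, consistent with $\mathscr{k} + \mathscr{l} = r(n-r) = n-1$. There are $n-1$ variables $x_1, \dots, x_{n-1}$ and a single variable $y_1$, so Theorem \ref{main} gives
$$\delta(m,n,n-1) = (-1)^{m-1} \frac{c(m,n,n-1)}{(n-1)!},$$
where $c(m,n,n-1)$ is the coefficient of $x_1^{n-1}\cdots x_{n-1}^{n-1} y_1^{n-1}$ in the polynomial
$$h_{n-m}(\mathcal{X}) \, h_{m-1}(\mathcal{Y}) \prod_{j \neq i}(x_i - x_j) \prod_{j=1}^{n-1}(y_1 - x_j).$$

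Next I would simplify the $y$-dependent factors. Since $n - r = 1$, the set $\mathcal{Y} = \{2y_1\}$ is a singleton, so $h_{m-1}(\mathcal{Y}) = (2y_1)^{m-1} = 2^{m-1} y_1^{m-1}$, while the Vandermonde factor $\prod_{j \neq i}(y_i - y_j)$ is an empty product equal to $1$ and has already been dropped. To produce the monomial $y_1^{n-1}$, the remaining $y_1$-degree $n - 1 - (m-1) = n - m$ must come from $\prod_{j=1}^{n-1}(y_1 - x_j)$. Expanding $\prod_{j=1}^{n-1}(y_1 - x_j) = \sum_{k=0}^{n-1} (-1)^{n-1-k} e_{n-1-k}(x) \, y_1^{k}$ and taking $k = n-m$ shows that the coefficient of $y_1^{n-m}$ is $(-1)^{m-1} e_{m-1}(x)$. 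Hence extracting $y_1^{n-1}$ reduces the task to computing the coefficient of $x_1^{n-1}\cdots x_{n-1}^{n-1}$ in
$$2^{m-1}(-1)^{m-1} \, e_{m-1}(x) \, h_{n-m}(\mathcal{X}) \prod_{j \neq i}(x_i - x_j).$$

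Finally I would invoke Proposition \ref{key_lm} with its ambient number of variables set to $n-1$ and with $k = m-1$; its hypothesis $h_{r-k}(\mathcal{X}) = h_{(n-1)-(m-1)}(\mathcal{X}) = h_{n-m}(\mathcal{X})$ matches exactly, so the sought $x$-coefficient equals $\binom{n}{m}(n-1)!$. Combining, $c(m,n,n-1) = 2^{m-1}(-1)^{m-1}\binom{n}{m}(n-1)!$, and substituting into the displayed formula for $\delta$ cancels the two factors of $(-1)^{m-1}$ and the $(n-1)!$, leaving $\delta(m,n,n-1) = 2^{m-1}\binom{n}{m}$. The argument is essentially routine once the reduction is set up; the only point demanding care is the bookkeeping of signs together with the split of the $y_1$-degree between $h_{m-1}(\mathcal{Y})$ and the cross term $\prod_{j=1}^{n-1}(y_1-x_j)$, since an error there would corrupt both the power of $2$ and the overall sign.
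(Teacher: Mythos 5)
Your proposal is correct and follows essentially the same route as the paper's own proof: specialize Theorem \ref{main} to $r=n-1$, use $h_{m-1}(\mathcal{Y})=(2y_1)^{m-1}$, extract the coefficient of $y_1^{n-m}$ from $\prod_{j=1}^{n-1}(y_1-x_j)$ as $(-1)^{m-1}e_{m-1}(x)$, and conclude with Proposition \ref{key_lm} for $k=m-1$ in $n-1$ variables. The sign and power-of-two bookkeeping, which you flag as the delicate point, is handled exactly as in the paper.
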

\begin{proof}
By Theorem \ref{main}, one has
$$\delta(m,n,n-1) = (-1)^{m-1}\frac{c(m,n,n-1)}{(n-1)!},$$
where $c(m,n,n-1)$ is the coefficient of the monomial $x_1^{n-1}\cdots x_{n-1}^{n-1}y_1^{n-1}$ in the following polynomial
\begin{equation*}
    \begin{split}
        & h_{n-m}(\mathcal{X})h_{m-1}(2y_1)\prod_{j \neq i}^{n-1}(x_i-x_j)\prod_{i=1}^{n-1}(y_1-x_i) \\
        & = h_{n-m}(\mathcal{X})(2y_1)^{m-1}\prod_{j \neq i}^{n-1}(x_i-x_j)\left( \sum_{k=0}^{n-1}(-1)^ke_k(x_1,\ldots,x_{n-1})y_1^{n-1-k} \right) \\
        & = (-1)^{m-1}2^{m-1}e_{m-1}(x_1,\ldots,x_{n-1})h_{n-m}(\mathcal{X})\prod_{j \neq i}^{n-1}(x_i-x_j)y_1^{n-1}+\textrm{ other terms which} \\
       & \hspace{10.5 cm} \textrm{do not contain factor $y_1^{n-1}$}\\
&= (-1)^{m-1}2^{m-1}\binom{n}{m}(n-1)! x_1^{n-1}\cdots x_{n-1}^{n-1}y_1^{n-1}+\textrm{other different terms}.
    \end{split}
\end{equation*}
The last equality follows from Proposition \ref{key_lm} with $k = m-1$.
\end{proof}

\begin{proposition}[cf. Theorem 11.2 in \cite{NRS}]\label{3_n-2} We have
$$\delta(3,n,n-2) = \binom{n+1}{3}.$$
\end{proposition}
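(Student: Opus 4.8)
The plan is to apply Theorem \ref{main} to the triple $(m,n,r) = (3, n, n-2)$ and reduce the computation to the coefficient extraction governed by Proposition \ref{key_lm2}. With $r = n-2$ we compute the two auxiliary parameters from \eqref{kl}: here $\mathscr k = m - \binom{n-r+1}{2} = 3 - \binom{3}{2} = 0$ and $\mathscr l = \binom{n+1}{2} - \binom{r+1}{2} - m = r(n-r) - \mathscr k = 2(n-2)$. Since $\mathscr k = 0$ we have $h_{\mathscr k}(\mathcal{Y}) = h_0(\mathcal{Y}) = 1$, which collapses the $\mathcal{Y}$-dependence dramatically: there is only one $y$-variable (namely $y_1, y_2$ since $n-r = 2$), and the factors involving $y$ simplify because the block $\prod_{j\neq i}(y_i - y_j)$ over two variables is just $(y_1 - y_2)(y_2 - y_1) = -(y_1-y_2)^2$.

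First I would write out the polynomial from Theorem \ref{main} explicitly, namely
\[
h_{2(n-2)}(\mathcal{X}) \cdot (y_1-y_2)(y_2-y_1) \prod_{\substack{1\le i,j\le n-2 \\ j\neq i}}(x_i - x_j) \prod_{i=1}^{2}\prod_{j=1}^{n-2}(y_i - x_j),
\]
and extract the coefficient of $x_1^{n-1}\cdots x_{n-2}^{n-1} y_1^{n-1} y_2^{n-1}$. The key idea is to expand the mixed factor $\prod_{i=1}^{2}\prod_{j=1}^{n-2}(y_i - x_j) = \prod_{j}(y_1 - x_j)\cdot\prod_{j}(y_2 - x_j)$ in powers of $y_1$ and $y_2$ using elementary symmetric polynomials: $\prod_{j=1}^{n-2}(y_i - x_j) = \sum_{k} (-1)^k e_k(x) y_i^{\,n-2-k}$ where $x = (x_1,\ldots,x_{n-2})$. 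Since $\mathscr k = 0$ forces $h_{\mathscr k}(\mathcal{Y})=1$, the only $y$-carrying factors are this product and the $(y_1-y_2)(y_2-y_1)$ term. To obtain the monomial $y_1^{n-1} y_2^{n-1}$, I would track which powers of $y_1$ and $y_2$ the various factors contribute; the degree-two factor $(y_1-y_2)^2$ supplies a small shift, and matching total degrees will pin down exactly which elementary symmetric polynomials $e_{k}(x)$ survive in the $x$-part.

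Once the $y$-extraction is done, the remaining coefficient to compute is that of $x_1^{n-1}\cdots x_{n-2}^{n-1}$ (equivalently $x_1^{(n-2)+1}\cdots x_{n-2}^{(n-2)+1}$, i.e. the $x_i^{r+1}$ pattern with $r = n-2$) in an expression of the shape $e_k(x)\, h_{2r-k}(\mathcal{X})\prod_{j\neq i}(x_i - x_j)$, which is precisely the form addressed by Proposition \ref{key_lm2}. That proposition evaluates such a coefficient as $(k+1)\binom{r+3}{k+3}r!$. I expect the main obstacle to be the careful bookkeeping in the $y$-expansion: determining exactly which value(s) of $k$ arise from forcing the $y_1^{n-1}y_2^{n-1}$ monomial, and collecting the signs and the factors of $2$ or shifts introduced by the $(y_1-y_2)^2$ term and the $(-1)^k$ signs from the elementary symmetric expansions. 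After substituting the correct $k$ and $r = n-2$ into Proposition \ref{key_lm2}, together with the sign $(-1)^{\mathscr k} = (-1)^0 = 1$ and the division by $r!(n-r)! = (n-2)!\,2!$ from Theorem \ref{main}, I anticipate the $(n-2)!$ cancels and the binomial simplifies to yield $\binom{n+1}{3}$; verifying that this arithmetic collapses cleanly, with no stray factor of $2$, is the delicate endgame.
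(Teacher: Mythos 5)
Your proposal is correct and follows essentially the same route as the paper: apply Theorem \ref{main} with $\mathscr k=0$, $\mathscr l=2n-4$, expand $\prod_{j=1}^{n-2}(y_i-x_j)=\sum_k(-1)^ke_k(x)y_i^{n-2-k}$, use the degree count in $y_1,y_2$ to see that only the term $2y_1y_2$ of $(y_1-y_2)(y_2-y_1)$ paired with the top terms $y_1^{n-2}y_2^{n-2}$ (i.e.\ $k=0$) can produce $y_1^{n-1}y_2^{n-1}$, and then invoke Proposition \ref{key_lm2} with $k=0$, $r=n-2$. The endgame you deferred does collapse exactly as you anticipated: $c(3,n,n-2)=2\binom{n+1}{3}(n-2)!$, and the factor $2$ cancels against the $2!$ in $r!(n-r)!$, giving $\delta(3,n,n-2)=\binom{n+1}{3}$ with sign $(-1)^{\mathscr k}=1$.
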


\begin{proof}
By Theorem \ref{main}, we have
$$\delta(3,n,n-2) = \frac{c(3,n,n-2)}{(n-2)!2!},$$
where $c(3,n,n-2)$ is the coefficient of the monomial $x_1^{n-1}\cdots x_{n-2}^{n-1}y_1^{n-1}y_2^{n-1}$ in the following polynomial
\begin{equation*}
    \begin{split}
        & h_{2n-4}(\mathcal{X})\prod_{j \neq i}(x_i-x_j)(2y_1y_2 - y_1^2 - y_2^2)\prod_{i=1}^{n-2}(y_1-x_i)\prod_{i=1}^{n-2}(y_2-x_i) \\
        & = 2h_{2n-4}(\mathcal{X})\prod_{j \neq i}(x_i-x_j)y_1^{n-1}y_2^{n-1}+\textrm{ other terms which do not contain factor $y_1^{n-1}y_2^{n-1}$} \\
        &= 2\binom{n+1}{3}(n-2)! x_1^{n-1}\cdots x_{n-1}^{n-1}y_1^{n-1}y_2^{n-1}+\textrm{other different terms}.
    \end{split}
\end{equation*}
The last equality follows from Proposition \ref{key_lm2} with $k=0$.
\end{proof}

\begin{proposition}[cf. Theorem 11.2 in \cite{NRS}]\label{4_n-2} We have
$$\delta(4,n,n-2) = 6\binom{n+1}{4}.$$
\end{proposition}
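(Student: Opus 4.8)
The plan is to follow the same strategy as in the proof of Proposition \ref{3_n-2}: apply Theorem \ref{main} to the triple $(m,n,r)=(4,n,n-2)$ and reduce the coefficient extraction to Proposition \ref{key_lm2}. First I would record the parameters: here $n-r=2$, $\mathscr k = 4-\binom{3}{2}=1$ and $\mathscr l = \binom{n+1}{2}-\binom{n-1}{2}-4 = 2n-5$, so that $\mathscr k+\mathscr l = 2(n-2)=r(n-r)$ as required. Since $(-1)^{\mathscr k}=-1$, Theorem \ref{main} gives
\[\delta(4,n,n-2) = -\frac{c(4,n,n-2)}{2\,(n-2)!},\]
where $c(4,n,n-2)$ is the coefficient of $x_1^{n-1}\cdots x_{n-2}^{n-1}y_1^{n-1}y_2^{n-1}$ in the polynomial of Theorem \ref{main}. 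I would simplify the two-variable factors first: with $\mathcal Y=\{2y_1,y_1+y_2,2y_2\}$ one has $h_{1}(\mathcal Y)=3(y_1+y_2)$ and $\prod_{j\neq i}(y_i-y_j)=-(y_1-y_2)^2$.

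The crucial step is to extract the coefficient of $y_1^{n-1}y_2^{n-1}$ from the $y$-dependent factor
\[-3\,(y_1+y_2)(y_1-y_2)^2\prod_{j=1}^{n-2}(y_1-x_j)\prod_{j=1}^{n-2}(y_2-x_j).\]
Writing $\prod_{j=1}^{n-2}(y_i-x_j)=\sum_{a}(-1)^{n-2-a}e_{n-2-a}(x)\,y_i^{a}$ and expanding $-3(y_1+y_2)(y_1-y_2)^2 = -3\bigl(y_1^3-y_1^2y_2-y_1y_2^2+y_2^3\bigr)$, I would match powers so that the total degree in each of $y_1,y_2$ is $n-1$. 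Because each $\prod_{j}(y_i-x_j)$ contributes at most $y_i^{n-2}$, the cubic terms $y_1^3$ and $y_2^3$ are out of range, and only the two mixed monomials $y_1^2y_2$ and $y_1y_2^2$ survive; each forces exactly one of the two products to supply a linear $e_1(x)$ and yields $-3e_1(x)$, for a total of $-6\,e_1(x_1,\ldots,x_{n-2})$.

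With this reduction, $c(4,n,n-2)$ equals $-6$ times the coefficient of $x_1^{n-1}\cdots x_{n-2}^{n-1}$ in $e_1(x)\,h_{2n-5}(\mathcal X)\prod_{j\neq i}(x_i-x_j)$. Setting $r=n-2$, this is exactly the coefficient of $x_1^{r+1}\cdots x_r^{r+1}$ in $e_1(x)h_{2r-1}(\mathcal X)\prod_{j\neq i}(x_i-x_j)$, which by Proposition \ref{key_lm2} with $k=1$ equals $2\binom{r+3}{4}r! = 2\binom{n+1}{4}(n-2)!$. Hence $c(4,n,n-2)=-12\binom{n+1}{4}(n-2)!$, and substituting back gives $\delta(4,n,n-2)=6\binom{n+1}{4}$. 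I expect the only delicate point to be the bookkeeping in the coefficient extraction of the second paragraph: unlike Proposition \ref{3_n-2}, where $h_0(\mathcal Y)=1$ makes the $y$-prefactor quadratic and the top monomial $y_1^{n-1}y_2^{n-1}$ arises with a constant coefficient, the extra factor $h_1(\mathcal Y)=3(y_1+y_2)$ raises the prefactor to degree three, so the target monomial can only be reached by borrowing a linear $e_1(x)$ from the products $\prod_{j}(y_i-x_j)$, and one must be careful to add the two symmetric contributions rather than double-count them.
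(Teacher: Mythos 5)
Your proposal is correct and follows essentially the same route as the paper: apply Theorem \ref{main} with $\mathscr k=1$, $\mathscr l=2n-5$, extract the coefficient of $y_1^{n-1}y_2^{n-1}$ from the $y$-dependent factor to obtain $-6e_1(x_1,\ldots,x_{n-2})$, and then invoke Proposition \ref{key_lm2} with $k=1$. Your explicit expansion of $-3(y_1+y_2)(y_1-y_2)^2$ and the degree-counting argument showing only the mixed monomials contribute is in fact more detailed than the paper's proof, which asserts the $-6e_1(x)$ reduction without showing the intermediate steps.
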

\begin{proof}
By Theorem \ref{main}, we have
$$\delta(4,n,n-2) = -\frac{c(4,n,n-2)}{(n-2)!2!},$$
where $c(4,n,n-2)$ is the coefficient of the monomial $x_1^{n-1}\cdots x_{n-2}^{n-1}y_1^{n-1}y_2^{n-1}$ in the following polynomial
\begin{equation*}
    \begin{split}
       & h_{2n-5}(\mathcal{X})h_1(\mathcal Y)\prod_{j \neq i}(x_i-x_j)(2y_1y_2 - y_1^2 - y_2^2)\prod_{i=1}^{n-2}(y_1-x_i)\prod_{i=1}^{n-2}(y_2-x_i) \\
        & = h_{2n-5}(\mathcal{X})h_1(\mathcal Y)\prod_{j \neq i}(x_i-x_j)(2y_1y_2 - y_1^2 - y_2^2)\prod_{i=1}^{n-2}(y_1-x_i)\prod_{i=1}^{n-2}(y_2-x_i) \\
        & = -6e_1(x) h_{2n-5}(\mathcal{X})\prod_{j \neq i}(x_i-x_j)y_1^{n-1}y_2^{n-1}+\textrm{ other terms which} \\
       & \hspace{10.5 cm} \textrm{do not contain factor $y_1^{n-1}y_2^{n-1}$}\\
        &= -12\binom{n+1}{4}(n-2)! x_1^{n-1}\cdots x_{n-1}^{n-1}y_1^{n-1}y_2^{n-1}+\textrm{other different terms}.
    \end{split}
\end{equation*}
The last equality follows from Proposition \ref{key_lm2} with $k=1$.
\end{proof}

\subsection*{Acknowledgements}
This research is supported by the 2021 Annual Research Program of the Dalat University. The first author is partially funded by the Simons Foundation Grant Targeted for Institute of Mathematics, Vietnam Academy of Science and Technology. He would like to thank the institute for the very kind support and hospitality. 

\bibliography{references}{}
\bibliographystyle{alpha}

\end{document}